\newcommand{\sect}[1]{\section{#1}\setcounter{equation}{0}}
\newcommand{\subsect}[1]{\subsection{#1}}
\font\mbn=msbm10 scaled \magstep1
\font\mbs=msbm7 scaled \magstep1
\font\mbss=msbm5 scaled \magstep1
\newcommand{\RR}       { \mathbb{R}}
\newcommand{\N}       { \mathbb{N}}
\newcommand{\Z}        {\mathbb{Z}  }
\newcommand\Co           {{\mathbb C}}
\newtheorem{Th}{Theorem}[section]
\newtheorem{Lm}[Th]{Lemma}
\newtheorem{D}[Th]{Definition}
\newtheorem{Prop}[Th]{Proposition}
\newtheorem{R}[Th]{Remark}
\newtheorem*{Problem}{Problem}
\newtheorem{E}[Th]{Example}
\newtheorem*{Th A}{Theorem A}
\newtheorem*{Th B}{Theorem B}
\begin{document}

\title[Subgroups of the Group of Formal Power Series]{Subgroups of the Group of Formal Power Series with the Big Powers Condition}
\author{Alexander Brudnyi}
\address{Department of Mathematics and Statistics\newline
\hspace*{1em} University of Calgary\newline
\hspace*{1em} Calgary, Alberta, Canada\newline
\hspace*{1em} T2N 1N4}
\email{abrudnyi@ucalgary.ca}

\keywords{Group of formal power series, the big powers condition, fully residually free group, free product of groups}
\subjclass[2010]{Primary 20E06. Secondary 20F38.}

\thanks{Research is supported in part by NSERC}

\begin{abstract}
We study the structure of discrete subgroups of the group  $G[[r]]$ of complex formal power series under the operation of composition of series. In particular, we prove that every finitely generated fully residually free group is embeddable to $G[[r]]$.
\end{abstract}

\date{}

\maketitle

\sect{Main Result}
Let $G[[r]]$ be the prounipotent group of formal power series of the form $r + \sum_{i=1}^\infty c_i r^{i+1}$, $c_i\in \mathbb C$, $i\in\N$, under the operation $\circ$ of composition of series. In the paper we study the problem on the structure of discrete subgroups of $G [[r]]$. The  problem is of importance, in particular, in connection with the classification of local analytic foliations and the holonomy of local differential equations (see, e.g., \cite{C}, \cite{CL}, \cite{EV}, \cite{IP}, \cite{L}, \cite{NY} and references therein).  The deep results of \cite{EV} show that in contrast to free prounipotent groups (see \cite[Cor.\,4.7]{LM})  the group $G [[r]]$ contains two-generator discrete subgroups which are neither abelian nor free (see also \cite{NY} for further results in this direction). In turn, in \cite[Problem\,4.15]{Br} we asked with regard to the center problem for families of Abel differential equations whether the fundamental groups of orientable compact Riemann surfaces are embeddable to $G [[r]]$. In this paper we answer this question affirmatively. Our approach is purely group-theoretical and can be applied to a wide class of prounipotent groups.

To formulate the main result of the paper we introduce several definitions.

Let $G$ be a group and $u = (u_1, \dots , u_k)$, $k\in\N$, be a tuple of non-trivial elements of $G$. We say that $u$ is {\em commutation-free} if $[u_i , u_{i+1} ]:=u_i u_ju_i^{-1}u_j^{-1}\ne 1$ for all $1\le i\le k-1$.  In turn, $u$ is called {\em independent} if there exists an integer $n = n(u) \in \N$ such that  $u_1^{\alpha_1} \cdots u_k^{\alpha_k}\ne 1$ for any integers $\alpha_1,\dots,\alpha_k \ge n$.
\begin{D}\label{def1.1}
Group $G$ satisfies the big powers condition  if every commutation-free tuple in $G$ is independent.
\end{D}
The groups subject to the definition are referred to as {\em $BP$-groups}. The class of $BP$-groups contains torsion-free abelian groups, free groups and torsion-free hyperbolic groups. Also, subgroups and direct and inverse limits of $BP$-groups are $BP$ as well. On the other hand, e.g., nonabelian torsion-free nilpotent groups are not $BP$ (see \cite[Thm.\,1]{KMS}). We recommend the paper \cite{KMS} for the corresponding references and  other examples and properties of $BP$-groups and their applications in group theory.

Let $\delta$ be an ordinal of cardinality $\le\mathfrak c$ and 
\begin{equation}\label{chain}
G_0\leq G_1\leq\cdots \leq G_\alpha\leq G_{\alpha+1}\leq\cdots\leq G_\delta
\end{equation}
be a chain of subgroups such that for each limit ordinal $\lambda$
\[
G_\lambda:=\bigcup_{\alpha<\lambda}G_\alpha.
\]
Suppose that for each successor ordinal $\alpha+1\le \delta$ one of the following holds:\smallskip

\noindent (i) $G_{\alpha+1}=G_\alpha*_{C_\alpha} F_\alpha$, where $F_\alpha$ is a nontrivial subgroup of $G_\alpha$, and either $C_\alpha=\{1\}$  or $C_\alpha=C_{G_\alpha}(u)=C_{F_\alpha}(u)$\footnote{$C_G(u)\le G$ stands for the centralizer of an element $u$ of a group $G$.}
 for some nontrivial $u\in F_\alpha$;

\noindent (ii) $G_{\alpha+1}$ is  an extension
of a centralizer of $G_\alpha$.\smallskip

\noindent Recall that an {\em  extension of a centralizer} of a group $G$ is the group $\langle G, t\, |\, [c, t] = 1,\, c \in C_G(u)\rangle$ for some nontrivial $u\in G$.
\begin{Th}\label{te1.2}
$G_\delta$ is a  $BP$-group embeddable to $G [[r]]$ if and only if
$G_0$ is.
\end{Th}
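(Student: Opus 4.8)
\emph{Proof plan.} The ``only if'' direction is formal: $G_0$ is a subgroup of $G_\delta$, subgroups of $BP$-groups are $BP$, and the restriction to $G_0$ of an embedding $G_\delta\hookrightarrow G[[r]]$ is an embedding of $G_0$. So the content is the ``if'' direction, which I would establish by transfinite induction on $\alpha\le\delta$. Fix once and for all an embedding $\iota_0\colon G_0\hookrightarrow G[[r]]$, and let $P(\alpha)$ be the assertion: $G_\alpha$ is a $BP$-group and there is an embedding $\iota_\alpha\colon G_\alpha\hookrightarrow G[[r]]$ with $\iota_\alpha|_{G_\beta}=\iota_\beta$ for all $\beta<\alpha$. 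Carrying the $\iota_\alpha$'s along coherently is exactly what allows the induction to survive limit stages. (A parallel induction shows $|G_\alpha|\le\mathfrak c=|G[[r]]|$ for all $\alpha$: one starts from $|G_0|\le\mathfrak c$, notes that the operations (i) and (ii) do not push the cardinality past $\max(\,\cdot\,,\aleph_0)$, and that a union of at most $\mathfrak c$ sets of size $\le\mathfrak c$ has size $\le\mathfrak c$ since $|\delta|\le\mathfrak c$; this is the sole role of the cardinality hypothesis, and it guarantees $G[[r]]$ is never too small to receive $G_\alpha$.)

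At a limit $\lambda\le\delta$ with $P(\alpha)$ for all $\alpha<\lambda$, the group $G_\lambda=\bigcup_{\alpha<\lambda}G_\alpha$ is a direct limit (directed union) of $BP$-groups, hence $BP$; the pairwise compatible maps $\iota_\alpha$ glue to $\iota_\lambda:=\bigcup_{\alpha<\lambda}\iota_\alpha$, which is injective because every nontrivial $g\in G_\lambda$ lies in some $G_\alpha$ with $\iota_\alpha(g)\neq1$, so $P(\lambda)$ holds. Thus everything reduces to the successor step. Put $H:=\iota_\alpha(G_\alpha)\le G[[r]]$ and transport the data defining $G_{\alpha+1}$ through $\iota_\alpha$; then $P(\alpha)\Rightarrow P(\alpha+1)$ is an instance of one of the following two assertions: \textbf{(a)} if $H\le G[[r]]$ is a $BP$-group, $K\le H$, and $C$ is either $\{1\}$ or a subgroup with $C=C_H(v)=C_K(v)$ for some nontrivial $v\in K$, then $H*_CK$ is a $BP$-group admitting an embedding into $G[[r]]$ whose restriction to the factor $H$ is the inclusion; \textbf{(b)} if $H\le G[[r]]$ is a $BP$-group and $v\in H$ is nontrivial, then $\langle H,t\mid[c,t]=1,\ c\in C_H(v)\rangle$ is a $BP$-group admitting an embedding into $G[[r]]$ whose restriction to $H$ is the inclusion. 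Given (a) and (b), each successor $\alpha+1$ yields $\iota_{\alpha+1}$ extending $\iota_\alpha$ (hence all earlier $\iota_\beta$) together with the $BP$ property of $G_{\alpha+1}$, and $P(\delta)$ is the desired conclusion.

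It remains to prove (a) and (b), whose two halves are quite different in character. That $H*_CK$, respectively the centralizer extension, is a $BP$-group is the combinatorial heart, and this is where the big powers condition of Definition~\ref{def1.1} is genuinely used: starting from the reduced-form (respectively normal-form) description of elements, one shows a commutation-free tuple must be independent by reducing the question to a commutation-free tuple inside a factor and invoking its $BP$ property --- in spirit this parallels the classical arguments that amalgams over, and extensions of, maximal abelian subgroups of fully residually free groups stay fully residually free. The embeddability halves are the main obstacle. For them I would use the structural fact that the centralizer in $G[[r]]$ of a nontrivial element is an abelian one-parameter (hence uncountable) subgroup $P$, so that in case (b) any $\tau\in P\supseteq C_H(v)$ automatically commutes with $C_H(v)$, while in case (a) with $C\neq\{1\}$ one conjugates a second copy of $K$ by an element of the one-parameter group $C_{G[[r]]}(C)$, and with $C=\{1\}$ by an element of $G[[r]]$, so as to make it meet $H$ exactly in $C$ and otherwise lie in ``general position''. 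A standard normal-form (ping-pong) criterion would then identify the subgroup generated with $H*_CK$, respectively with the centralizer extension, \emph{provided} the parameter ($\tau$, or the conjugating element) is chosen so that no nontrivial element of the abstract group maps to $1$. The genuine difficulty is precisely to secure such a choice: the ``bad'' parameters are the zero loci of the power series attached to the (possibly infinitely many) words that must be killed, and one has to show these loci do not exhaust the parameter space. This is delicate because $H$ may be uncountable, so one cannot merely count countably many bad conditions; it is here, in balancing the analytic rigidity of composition of power series against the room guaranteed by the $BP$ property of $H$, that I expect the core work to lie.
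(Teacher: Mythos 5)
Your skeleton (transfinite induction, gluing at limit ordinals, reduction of the successor step to two one-step assertions) coincides with the paper's overall structure, and the ``only if'' direction is indeed formal. But the proposal has a genuine gap, in two places. First, the heart of your assertions (a) and (b) --- producing the conjugating element, resp.\ the stable letter, so that no nontrivial word collapses --- is exactly what you leave open, and your framing of it as avoiding the ``zero loci of uncountably many bad words'' is not how it can be done and is where you admit being stuck. The paper's idea is different and not a genericity count: one first uses that $\Co$ has transcendence degree $\mathfrak c$ over $\Q$ to transport the ambient group, via a field embedding $\sigma:\Co\hookrightarrow\Co$, into $G_{\sigma(\Co)}[[r]]$, so that there exist $s,t\in\Co$ algebraically independent over the coefficient field; one then conjugates the second factor by $c^{-s}$ (or by $c^{-t}$ with $c=\exp(se_1+s^2e_2)$ when $C=\{1\}$), resp.\ adjoins $u^{s}$. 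Because the coefficients of any word in these elements are polynomials over the coefficient field in the single transcendental parameter (formulas \eqref{log}, \eqref{word}), a single collapsing identity forces the same identity for \emph{every} value of the parameter, in particular for all integer exponents; the separation form of the $BP$ condition of $H_0$ then yields a commutation, which by Lemma \ref{commute} and the CSA property (Proposition \ref{csa}) pushes a syllable into the amalgamated subgroup and reduces the length of the word (Lemmas \ref{lem2.2}, \ref{lem3.4}, \ref{lem3.5}). One transcendental parameter thus handles all words simultaneously; no bad-locus analysis is needed. Note also that the paper does not reprove the ``combinatorial half'' (that the amalgam/centralizer extension is $BP$): it quotes \cite{KMS} (Thm.~4, Cor.~6, Prop.~5) together with Proposition \ref{csa}; your sketch of that half would itself be a substantial theorem.

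Second, your inductive statement $P(\alpha)$ is too weak to make the successor step go through, and your reading of the hypothesis $|\delta|\le\mathfrak c$ is off. Your versions of (a) and (b) demand an embedding of $H*_CK$, resp.\ of the centralizer extension, into $G[[r]]$ restricting to the \emph{inclusion} on an arbitrary $BP$-subgroup $H\le G[[r]]$. That is stronger than Theorem \ref{te1.2a}, and it is not available by this method when the coefficients of $H$ already generate all of $\Co$ (then no parameter transcendental over the coefficient field exists; compare the paper's open Problem (b)). The paper avoids this by strengthening the induction: $P(\lambda)$ records not just an embedding $\varphi_\lambda$ extending the earlier ones, but that $\varphi_\lambda(G_\lambda)\le G_{\mathbb F_\lambda}[[r]]$ for an explicitly controlled proper subfield $\mathbb F_\lambda\subset\Co$, built from $\sigma(\Co)$ and the values $\tau_i(\gamma)$, $\gamma\le\lambda$, where $\tau_i:\delta\rightarrow S_i$ are injections into disjoint pieces of a transcendence basis. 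This field bookkeeping is what guarantees fresh algebraically independent parameters at every successor stage, and it is precisely here --- not in bounding $|G_\alpha|$ by $\mathfrak c$ --- that the hypothesis that $\delta$ has cardinality $\le\mathfrak c$ is used. Without strengthening your $P(\alpha)$ in this way, your reduction to (a) and (b) reduces the theorem to statements that are not known to be true.
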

\begin{E}\label{ex1.3}
{\rm (1) Let $G_0\,(\cong\mathbb C)$ be a one-parametric subgroup of $G[[r]]$ and $G_{\alpha+1}=G_\alpha* G_0$ for all successor ordinals $\alpha+1\le\delta$, where $\delta$ is of the cardinality of the continuum $\mathfrak c$. Then $G_\delta$ is isomorphic to the free product of $\frak c$ copies of $\mathbb C$ and due to Theorem \ref{te1.2}  it is a $BP$-group embeddable to $G [[r]]$. \smallskip

\noindent (2) A group $G$ is called {\em fully residually free} if for any finite subset $X$ of $G$ there exists a homomorphism from $G$ to a free group that is injective on $X$. The notion was introduced in \cite{B2} and since then extensively studied in connection with important problems of group theory and logic. Deep results of \cite{MR} and \cite{KM} assert that a {\em finitely generated fully residually free group} is embeddable to a finite sequence of extensions of centralizers of the free group of rank two. Hence, due Theorem \ref{te1.2}(b) and part (1) of the example 
a finitely generated fully residually free group is a $BP$-group embeddable to $G[[r]]$. Since all non-exceptional fundamental groups of compact Riemann surfaces (i.e., distinct from the fundamental groups of non-orientable surfaces of Euler characteristic $1,0$ or $-1$) are fully residually free (see \cite{B1}), they are embeddable to $G[[r]]$. This answers \cite[Problem\,4.15]{Br}.\smallskip

\noindent (3) Let $G^{\Z[t]}$ be the Lyndon's completion of a finitely generated fully residually free group $G$. The notion was introduced in \cite{L} in order to describe the solutions of equations in a single variable with coefficients in a free group. The recent result of \cite{MR} asserts that
$G^{\Z[t]}$ is the direct limit of a countable chain of extensions of centralizers
$G\le G_1\le G_2\le\cdots$. Hence, Theorem \ref{te1.2}(b) and part (2) imply that $G^{\Z[t]}$ is a $BP$-group embeddable to $G[[r]]$.
}
\end{E}
\begin{R}
{\rm (1) Let $\mathbb F\subset\Co$ be a subfield and $G_\mathbb F [[r]]<G[[r]]$ be the subgroup of series with coefficients in $\mathbb F$. A minor modification of the proof of Theorem \ref{te1.2} (see Section 4) leads to the following result.
\begin{Th}\label{te1.5}
Suppose the cardinality of $G_0$ is less than $\mathfrak c$. Then $G_\delta$ is a  $BP$-group embeddable to $G_\RR [[r]]$ if and only if $G_0$ is.
\end{Th}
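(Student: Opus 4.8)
The ``only if'' direction needs no hypothesis on the cardinality: by the chain \eqref{chain} one has $G_0\le G_\delta$, so an embedding $G_\delta\hookrightarrow G_\RR[[r]]$ restricts to one of $G_0$, and being a $BP$-group is inherited by subgroups. So suppose $G_0$ is a $BP$-group with $|G_0|<\mathfrak c$ that embeds in $G_\RR[[r]]$; the task is to prove the same for $G_\delta$. Observe first that $|G_\delta|\le\mathfrak c$: along \eqref{chain} the cardinality grows by at most a factor $\aleph_0$ at each successor step (in case (i) one has $F_\alpha\le G_\alpha$), unions are taken at limit ordinals, and $|\delta|\le\mathfrak c$; since $|G_\RR[[r]]|=\mathfrak c$ the target is large enough to contain a group of this size. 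The plan is to rerun the proof of Theorem \ref{te1.2} essentially verbatim, systematically replacing the complex-coefficient objects it uses (the group $G[[r]]$, its one-parameter subgroups, its centralizers) by their real-coefficient counterparts inside $G_\RR[[r]]$, and to isolate the single place where $|G_0|<\mathfrak c$ is genuinely needed.

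As in Theorem \ref{te1.2}, I would proceed by transfinite induction along \eqref{chain}, constructing a \emph{compatible} tower of embeddings $\iota_\alpha\colon G_\alpha\hookrightarrow G_\RR[[r]]$ (so that $\iota_{\alpha+1}|_{G_\alpha}=\iota_\alpha$) and simultaneously checking that each $G_\alpha$ is a $BP$-group. The $BP$ half of the assertion is purely group-theoretic and unaffected by the change of ground field: the class of $BP$-groups is closed under subgroups and direct limits and is preserved by the two operations (i) and (ii), exactly as in the complex case. At a limit ordinal $\lambda$ one sets $\iota_\lambda:=\bigcup_{\alpha<\lambda}\iota_\alpha$, which is an embedding of $G_\lambda=\bigcup_{\alpha<\lambda}G_\alpha$ precisely because the $\iota_\alpha$ form a compatible system. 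The base of the induction is the given embedding of $G_0$, replaced if the scheme of Theorem \ref{te1.2} requires it by a conjugate in general position.

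The successor step reduces, as in Theorem \ref{te1.2}, to two statements about a $BP$-group $H$ already embedded in $G_\RR[[r]]$: (I) the amalgam $H*_C F$ with $F\le H$ nontrivial and $C=\{1\}$ or $C=C_H(u)=C_F(u)$ for a nontrivial $u\in F$ embeds in $G_\RR[[r]]$ so as to extend the embedding of $H$; and (II) every extension of a centralizer of $H$ embeds so as to extend the embedding of $H$. Both are treated by the complex-case argument after the evident translation: the new generator — a conjugating element realizing the amalgam in (I), the stable letter $t$ in (II) — is sought inside a suitable real one-parameter subgroup, namely the formal flow $\{\exp(sv):s\in\RR\}\cong(\RR,+)$ of a formal vector field $v$ with real coefficients, or inside a suitable subvariety of $G_\RR[[r]]$ defined over $\RR$; it is chosen to centralize the prescribed subgroup, so that $\iota_{\alpha+1}$ is well defined, and then the absence of spurious relations is verified by the Britton/normal-form argument of Theorem \ref{te1.2}, invoking the $BP$ property of $H$ and the structure of centralizers in $G_\RR[[r]]$ exactly as in the complex setting. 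This flow machinery transfers without difficulty, since real formal vector fields have real formal flows and real formal logarithms (the relevant universal formulas have rational coefficients).

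The one substantive point — the only place where the ground field really intervenes, and the main obstacle — is the availability of the generic choice in the successor step. In the complex case the new generator is taken from an infinite-dimensional affine $\Co$-set (or a flow $\cong(\Co,+)$) after deleting a ``forbidden'' locus: the parameters for which some reduced word of the amalgam or HNN extension degenerates to $1$, or for which a commutation constraint fails. Over $\Co$ this locus is harmless for reasons tied to the uncountability and algebraic closedness of $\Co$. Over $\RR$ one must instead show that the corresponding forbidden subset of the real parameter set — itself of cardinality $\mathfrak c$ — is a proper subset, and this is exactly where $|G_0|<\mathfrak c$ is used: it keeps the cardinality of the data defining the forbidden locus at each stage (the amalgamating subgroup $C$, the family of reduced words required to stay nontrivial) strictly below $\mathfrak c$, via a cardinality bookkeeping along \eqref{chain} using the $\aleph_0$-type growth at successor steps and $|\delta|\le\mathfrak c$. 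Carrying out this bookkeeping and confirming at each stage that the forbidden set is a proper subset of the relevant real flow or affine subvariety is the crux of the adaptation; granting it, the proof of Theorem \ref{te1.2} applies with no further change and yields that $G_\delta$ is a $BP$-group embeddable in $G_\RR[[r]]$.
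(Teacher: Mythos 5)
Your high-level plan --- rerun the proof of Theorem \ref{te1.2} over $\RR$ and locate the role of the hypothesis $|G_0|<\mathfrak c$ in securing suitable parameters at the successor steps --- is the right one, and the transfer of the centralizer/CSA machinery and of Theorem \ref{te1.2a} that you describe is indeed how the paper proceeds (its Theorem \ref{te1.2b}). But the step you explicitly defer (``granting it'') is the actual content of the theorem, and the mechanism you sketch for it does not work as stated. You propose a genericity argument: at each successor step delete a ``forbidden locus'' and argue it is proper because $|G_0|<\mathfrak c$ ``keeps the cardinality of the data defining the forbidden locus at each stage strictly below $\mathfrak c$.'' That cardinality claim is false in general: $\delta$ may have cardinality $\mathfrak c$, so at late stages $G_\alpha$, and the subfield of $\RR$ generated by the coefficients of its image together with all previously chosen parameters, can already have cardinality $\mathfrak c$; a stage-by-stage generic choice against data of size $\mathfrak c$ is then not guaranteed (an element of $\RR$ transcendental over a subfield of transcendence degree $\mathfrak c$ need not exist). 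Moreover the injectivity proofs in Theorem \ref{te1.2a} do not run by excluding a small bad set: they need the parameter to be \emph{transcendental} over the current coefficient field, so that a relation at that parameter propagates to all integer exponents and the separation condition can be applied; and the reason the complex case needs no cardinality hypothesis is not that ``uncountability and algebraic closedness make the locus harmless,'' but the compression embedding $\sigma:\Co\hookrightarrow\Co$ at the start of the proof of Theorem \ref{te1.2a}, which has no analogue for $\RR$ (a field embedding of $\RR$ into itself is the identity) --- this is exactly why Theorem \ref{te1.5} carries the cardinality assumption.

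The paper's actual use of $|G_0|<\mathfrak c$ is a one-time pre-allocation, not per-stage bookkeeping. Since $|G_0|<\mathfrak c$, the subfield $\mathbb F\subset\RR$ generated by the coefficients of the elements of $G_0$ has cardinality $<\mathfrak c$; choose a transcendence basis $S=S_0\sqcup S_0^c$ of $\RR$ over $\Q$ with $S_0$ a transcendence basis of $\mathbb F$, so that $|S_0^c|=\mathfrak c$, write $S_0^c=S_1\sqcup S_2\sqcup S_3$ with each piece of cardinality $\mathfrak c$, and fix injections $\tau_i:\delta\rightarrow S_i$. At the successor step $\alpha+1$ one uses the parameters $\tau_i(\alpha+1)$: being hitherto unused members of a single fixed transcendence basis, they are automatically algebraically independent over the field generated by $\mathbb F$ and all previously used parameters, no matter how large $G_\alpha$ or that field has become, and then the proofs of Lemmas \ref{lem2.1}--\ref{lem3.5} apply verbatim with $\sigma(\Co)$ replaced by the current coefficient field and $\Co$ by $\RR$. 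To repair your argument you would need to replace your per-stage ``forbidden locus is small'' claim by this pre-allocation (or some equivalent device); as written, the crux of the proof is missing and the sketch of it contains an incorrect cardinality assertion.
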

\noindent In particular, the Lyndon's completion $G^{\Z[t]}$, where $G$ is a finitely generated fully residually free group, is embeddable to $G_\RR [[r]]$.

\noindent (2) In view of our main result the following questions seem plausible.
\begin{Problem}
{\rm (a)} Is $G[[r]]$ a $BP$-group?\smallskip

\noindent {\rm (b)} Suppose  groups $G_1,G_2$ are embeddable to $G[[r]]$. Is $G_1*G_2$ embeddable to $G[[r]]$?\smallskip

\noindent {\rm (c)}  Let $\bar{\mathbb Q}$ be the algebraic closure of the field of rational numbers $\mathbb Q$. Is a finitely generated fully residually free group embeddable to $G_{\bar{\mathbb Q}}[[r]]$?
\end{Problem}
\noindent (Note that the proof of Theorem \ref{te1.2} uses the fact that the transcendence degree of $\Co$ is $\mathfrak c$.)
}
\end{R}

In a forthcoming paper we present some applications of Theorems \ref{te1.2} and \ref{te1.5} to the center problem for ordinary differential equations.

\sect{Auxiliary Results}
\subsect{}
In our proofs we use the following notion equivalent to the $BP$ condition.

We say that  a group $G$ satisfies the {\em separation condition} if for any positive integer $k$ and any tuples $u = (u_1,...,u_k)$ and $g = (g_1,...,g_{k+1})$ of elements from $G$ such that
\[
[g_{i+1}^{-1}u_i g_{i+1}, u_{i+1}]\ne 1\quad {\rm for}\quad i=1,\dots, k-1,
\]
there exists an integer $n=n(u,g)$ such that
\[
g_1 u_1^{\alpha_1}g_2u_2^{\alpha_2}\cdots g_k u_k^{\alpha_k}g_{k+1}\ne 1
\]
for any integers $\alpha_1,\dots,\alpha_k\ge n$.

It was proved in \cite[Prop.\,1]{KMS} that a group $G$ satisfies the big powers condition if and only if it satisfies the separation condition.
\subsect{}
We also use some known facts about the prounipotent group $G[[r]]$.

The Lie algebra $\mathfrak g$ of $G [[r]]$ consists of formal vector fields
of the form $\sum_{j=1}^\infty c_j e_j$, $c_j\in\Co$, where $e_j:=-x^{j+1}\frac{d}{dx}$. Here the Lie bracket satisfies the identities  $[e_i,e_j]=(i-j)e_{i+j}$ for all $i,j\in\N$. Moreover,
if $v_r$ is the formal solution of the initial value problem
\[
\frac{dv}{dx}=\sum_{j=1}^\infty c_j v^{j+1},\qquad v(0)=r,
\]
then the exponential map $\exp: \mathfrak g\rightarrow  G[[r]]$ sends the element  $\sum_{j=1}^\infty c_j e_j$ to $v_r(1)$, where
\begin{equation}\label{expon}
v_r(1)=r+\sum_{i=1}^\infty\left(\sum_{i_1+\cdots +i_k=i}
\frac{(i_1+1)(i_1+i_2+1)\cdots (i-i_k+1)\, c_{i_1}\cdots c_{i_k}}{k!} \right)r^{i+1}.
\end{equation}
The map $\exp$ is bijective. We denote its inverse by $\log:  G [[r]]\rightarrow \mathfrak g$. Then for $h=r+\sum_{i=1}^\infty h_i r^{i+1}$, $h_i\in\Co$,
\begin{equation}\label{log}
\log\, h=\sum_{i=1}^\infty P_i(h_1,\dots,h_i)\,e_i,
\end{equation}
where $P_i\in\mathbb Q[x_1,\dots, x_i]$, $i\in\N$.

In turn, let $w(X_1,\dots, X_n)$ be a word in the free group with generators $X_1,\dots, X_n$.
For some $a_1,\dots, a_n\in \mathfrak g$ we set $\tilde w(a_1,\dots, a_n):=w(\exp(a_1),\dots,\exp(a_n))$. Then the formula for the composition of series and \eqref{expon} imply that
\begin{equation}\label{word}
\tilde w(a_1,\dots, a_n)=r+\sum_{i=1}^\infty Q_i(a_1,\dots, a_n) r^{i+1},
\end{equation}
where $Q_i$ is a polynomial with rational coefficients of degree $i$ in the first $i$ coefficients of the series expansions of $a_1,\dots, a_n$.

We also use the following fact.
\begin{Lm}\label{commute}
Elements $\exp(a_1),\exp(a_2)\in G [[r]]$ with nonzero $a_1,a_2\in \mathfrak g$ commute iff  $a_1=\lambda a_2$ for some $\lambda\in\Co$.
\end{Lm}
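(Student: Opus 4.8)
The plan is to transfer the commutation relation from $G[[r]]$ to the Lie algebra $\mathfrak g$ via the adjoint representation, and then to exploit the grading of $\mathfrak g$ by the index $i$ together with the structure constants $[e_i,e_j]=(i-j)e_{i+j}$. Concretely, I would use the standard prounipotent identities $x\exp(a)x^{-1}=\exp(\mathrm{Ad}(x)a)$ and $\mathrm{Ad}(\exp b)=\exp(\mathrm{ad}\,b)=\sum_{k\ge 0}\tfrac1{k!}(\mathrm{ad}\,b)^k$, for $x\in G[[r]]$ and $a,b\in\mathfrak g$; both reduce to the finite-dimensional identities on the unipotent quotients of $G[[r]]$ obtained by truncating series modulo $r^{N+2}$ (the series defining $\mathrm{Ad}(\exp b)$ converges in $\mathfrak g$ because $(\mathrm{ad}\,b)^k$ raises the lowest index by at least $k$). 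Since $\exp:\mathfrak g\to G[[r]]$ is bijective, $\exp(a_1)$ and $\exp(a_2)$ commute if and only if $\exp(\mathrm{ad}\,a_1)a_2=a_2$, that is, if and only if
\[
[a_1,a_2]+\tfrac12\,[a_1,[a_1,a_2]]+\tfrac16\,[a_1,[a_1,[a_1,a_2]]]+\cdots=0 .
\]
Call this relation $(\star)$. If $a_1=\lambda a_2$ then $[a_1,a_2]=0$ and $(\star)$ holds, which gives the ``if'' direction.

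For the converse I would argue with leading terms. For $0\ne a=\sum_{i\ge 1}c_ie_i\in\mathfrak g$ let its \emph{order} $o(a)$ be the least $i$ with $c_i\ne 0$; since the bracket maps $\Co e_i\times\Co e_j$ into $\Co e_{i+j}$, one has $o([a,b])\ge o(a)+o(b)$ whenever $[a,b]\ne0$, and the degree-$(o(a)+o(b))$ homogeneous component of $[a,b]$ equals $c_{o(a)}d_{o(b)}\bigl(o(a)-o(b)\bigr)e_{o(a)+o(b)}$ (writing $a=\sum c_ie_i$, $b=\sum d_ie_i$). Now suppose $(\star)$ holds and, for contradiction, $[a_1,a_2]\ne0$; put $m=o([a_1,a_2])$. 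Every iterated bracket in $(\star)$ beyond the first has order $\ge m+o(a_1)>m$, so the degree-$m$ component of the left side of $(\star)$ is the nonzero leading term of $[a_1,a_2]$, which is impossible. Hence $[a_1,a_2]=0$. Writing $p=o(a_1)$, $q=o(a_2)$, the leading-component formula then forces $p=q$ (as $c_p,d_q\ne0$). Finally, $\hat a_2:=a_2-(d_p/c_p)a_1$ satisfies $[a_1,\hat a_2]=0$; if $\hat a_2\ne0$ it has $o(\hat a_2)>p$, yet $[a_1,\hat a_2]=0$ forces $o(\hat a_2)=p$ by the same leading-component formula — a contradiction. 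So $\hat a_2=0$, i.e. $a_2=(d_p/c_p)a_1$, proving the ``only if'' direction.

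I do not expect a genuine obstacle; the one step deserving care is the reduction to $(\star)$ in the completed, infinite-dimensional setting, which is handled routinely by passing to the finite-dimensional unipotent truncations of $G[[r]]$ and taking an inverse limit. Everything else is the elementary leading-term bookkeeping enabled by the Witt-type relations $[e_i,e_j]=(i-j)e_{i+j}$; in fact $(\star)$, or an equivalent polynomial identity in the coefficients, could instead be read off directly from \eqref{word}.
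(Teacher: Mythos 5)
Your proposal is correct, and its overall plan coincides with the paper's: reduce commutation in $G[[r]]$ to $[a_1,a_2]=0$ in $\mathfrak g$, then exploit the grading and the Witt relations $[e_i,e_j]=(i-j)e_{i+j}$ to force proportionality. The execution of both halves differs, though. To get $[a_1,a_2]=0$ the paper upgrades the single conjugation identity to the one-parameter family $t\mapsto\exp(ta_1)$ (using linearity of the adjoint action) and then differentiates at $t=0$, whereas you invoke $\mathrm{Ad}(\exp a_1)=\exp(\mathrm{ad}\,a_1)$ and kill the tail of the resulting series by a leading-order argument in the grading; this avoids the formal limit but requires justifying the $\mathrm{Ad}$--$\mathrm{ad}$ exponential identity, which you handle correctly via the finite-dimensional unipotent truncations (the convergence remark, that $(\mathrm{ad}\,a_1)^k$ raises the lowest index by at least $k$, is exactly what is needed). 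For the proportionality step the paper runs an induction on coefficients from the relations $\sum_{i+j=n}c_{i1}c_{j2}(j-i)=0$ (its \eqref{induction}), with the same initial observation that the lowest orders must agree; your subtraction trick $\hat a_2:=a_2-(d_p/c_p)a_1$ together with the leading-component formula reaches the same conclusion in one stroke and is arguably cleaner. Both routes are complete; yours lets the grading do all the work at the cost of one extra standard Lie-theoretic identity, while the paper's stays closer to explicit coefficient bookkeeping.
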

\begin{proof}
If $\exp(-a_2)\exp(a_1)\exp(a_2)=\exp(a_1)$, then passing to the logarithm we get
\[
{\rm ad}(\exp(a_2))(a_1)=a_1,
\]
where ${\rm ad}$ is the differential at $1$ of the map
${\rm Ad}(\exp(a_2))(g):=\exp(-a_2)g\exp(a_2)$, $g\in G[[r]]$. Multiplying both parts of the previous equation by $t\in\mathbb C$ and taking the exponents we obtain that
$\exp(-a_2)\exp(ta_1)\exp(a_2)=\exp(ta_1)$ for all $x\in\Co$. This implies
\[
[a_1,a_2]:=\lim_{t\rightarrow 0}\frac{1}{t}\bigl({\rm ad}(\exp(ta_1))(a_2)-a_2\bigr)=0.
\]
Further, if $a_k=\sum_{j=j_k}^\infty c_{jk}e_j$, where $c_{j_k k}\ne 0$, $k=1,2$, then
\[
0=[a_1,a_2]=\sum_{n=1}^\infty\sum_{i+j=n} c_{i1}c_{j2}[e_i,e_j]= \sum_{n=1}^\infty\left(\sum_{i+j=n} c_{i1}c_{j2}(j-i)\right)e_{n}.
\]
Thus,
\begin{equation}\label{induction}
\sum_{i+j=n} c_{i1}c_{j2}(j-i)=0\quad {\rm for\ all}\quad n\ge 1.
\end{equation}
In particular, $c_{j_1 1} c_{j_2 2} (j_2-j_1)=0$, i.e., $j_2=j_1$ and there exists a nonzero $\lambda\in\Co$ such that $c_{j_1 1}=\lambda c_{j_2 2}$.

 Assume now that we have proved that $c_{j 1}=\lambda c_{j 2}$ for all $j_1\le j< n$. Let us prove that $c_{n 1}=\lambda c_{n2}$ as well. Indeed, due to \eqref{induction} and our hypothesis we obtain
\[
\begin{array}{l}
\displaystyle
0=\sum_{i+j=n+j_1} c_{i1}c_{j2}(j-i)=c_{n 1}c_{j_1 2}(j_1-n)+\lambda c_{j_1 2}c_{n 2}(n-j_1)
+\sum_{ i+j=n+j_1,\, i>j_1}\lambda c_{i 2} c_{j 2} (j-i)\\
\\
\displaystyle \quad =c_{n 1}c_{j_1 2}(j_1-n)+\lambda c_{j_1 2}c_{n 2}(n-j_1).
\end{array}
\]
This gives the required. Hence, we obtain by induction that $a_1=\lambda a_2$.

The converse statement is obvious.
\end{proof}

A subgroup $H$ of a group $G$ is called {\em malnormal} if $H\cap g^{-1}Hg=\{1\}$, $g\in G$ implies $g\in  H$. A group is called $CSA$ if every maximal abelian subgroup is malnormal.

As a corollary of Lemma \ref{commute} we obtain:
\begin{Prop}\label{csa}
Any subgroup of $G[[r]]$ is $CSA$.
\end{Prop}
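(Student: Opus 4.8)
The plan is to derive the Proposition from Lemma~\ref{commute} together with the elementary pro-unipotent filtration of $G[[r]]$. The first, soft, step records the consequence of Lemma~\ref{commute}: for non-trivial $h=\exp(b)$ the centralizer $C_{G[[r]]}(h)=\{\exp(\lambda b):\lambda\in\Co\}$ is a subgroup isomorphic to $(\Co,+)$, hence abelian; therefore in any subgroup $H\le G[[r]]$ the centralizer of every non-trivial element is abelian (in particular $H$ is commutative transitive). Now let $M$ be a maximal abelian subgroup of $H$. To prove $M$ is malnormal in $H$, assume $g\in H$ and $1\ne h\in M\cap g^{-1}Mg$; I want $g\in M$. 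Since $M$ is abelian and $h\in M$, we have $M\le C_H(h)=C_{G[[r]]}(h)\cap H$, which is abelian, so maximality of $M$ gives $M=C_H(h)$. On the other hand $ghg^{-1}\in M=C_H(h)$ commutes with $h$ and is non-trivial, so Lemma~\ref{commute} yields $ghg^{-1}=\exp(\mu b)$ for some $\mu\in\Co\setminus\{0\}$, where $b=\log h$. Thus the whole statement reduces to proving $\mu=1$: then $ghg^{-1}=\exp(b)=h$, so $g\in C_{G[[r]]}(h)\cap H=M$.

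The hard part is excluding $\mu\ne1$, i.e.\ ruling out a ``dilation'' conjugacy $h\mapsto\exp(\mu b)$; this is exactly where the pro-unipotent structure of $G[[r]]$, and not merely Lemma~\ref{commute}, is used (commutative transitivity is known to be strictly weaker than the CSA property, even for torsion-free groups). For $k\ge1$ put $H_k:=\{r+\sum_{i\ge k}h_ir^{i+1}\}$, which corresponds under $\exp$, via \eqref{expon} and \eqref{log}, to $\mathfrak g_{\ge k}:=\mathrm{span}_\Co\{e_k,e_{k+1},\dots\}$. From $[e_i,e_j]=(i-j)e_{i+j}$ and the absence of weights below $1$ one gets $[\mathfrak g,\mathfrak g_{\ge k}]\subseteq\mathfrak g_{\ge k+1}$, so $\mathfrak g_{\ge k}$ is an ideal, each $H_k$ is normal in $G[[r]]$, and (from Baker--Campbell--Hausdorff, or directly from the composition rule) $[G[[r]],H_k]\subseteq H_{k+1}$. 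A short computation with the composition rule shows that $\chi_k\colon H_k\to(\Co,+)$, $r+\sum_{i\ge k}x_ir^{i+1}\mapsto x_k$, is a homomorphism with kernel $H_{k+1}$; combined with $[G[[r]],H_k]\subseteq H_{k+1}$ this makes $\chi_k$ invariant under conjugation by $G[[r]]$. Finally, \eqref{expon} gives $\chi_k(\exp(\lambda c))=\lambda c_k$ whenever $c=\sum_{j\ge k}c_je_j$ and $\lambda\in\Co$.

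To conclude, let $j_0$ be the least index with $h_{j_0}\ne0$, so $h\in H_{j_0}\setminus H_{j_0+1}$ and $b=\log h=\sum_{j\ge j_0}c_je_j$ with $c_{j_0}=\chi_{j_0}(h)=h_{j_0}\ne0$. As $H_{j_0}$ is normal, $ghg^{-1}\in H_{j_0}$, and applying the conjugation-invariant homomorphism $\chi_{j_0}$ to $ghg^{-1}=\exp(\mu b)$ yields $h_{j_0}=\chi_{j_0}(h)=\chi_{j_0}(ghg^{-1})=\chi_{j_0}(\exp(\mu b))=\mu c_{j_0}=\mu h_{j_0}$, hence $\mu=1$, which finishes the argument. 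I expect the only non-formal point to be the three properties of $\chi_{j_0}$ invoked above (that it is a conjugation-invariant homomorphism on $H_{j_0}$ with the stated effect on the one-parameter subgroup through $h$); these are routine consequences of the explicit bracket relations $[e_i,e_j]=(i-j)e_{i+j}$ and of \eqref{expon}, whereas the reduction in the first paragraph is soft and works verbatim in any commutative-transitive group with abelian element centralizers.
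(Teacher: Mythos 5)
Your proof is correct and follows essentially the same route as the paper: identify maximal abelian subgroups as centralizers via Lemma \ref{commute}, reduce malnormality to showing the conjugate $\exp(\mu\log h)$ of $h$ has $\mu=1$, and extract $\mu=1$ from the leading coefficient. Your conjugation-invariant character $\chi_{j_0}$ on $H_{j_0}$ is exactly the paper's device of passing to the quotient by the normal subgroup of series vanishing to higher order, where the image of $C_{G[[r]]}(h)$ is central and isomorphic to $\Co$ via $\exp(\lambda\log h)\mapsto\lambda h_{j_0}$.
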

\begin{proof}
Let $H\subset G[[r]]$ and $A\subset H$  be a maximal abelian subgroup of $H$. Without loss of generality we may assume that $H$ is nontrivial. Then $A$ contains a centralizer $C_H(h)$ of a nontrivial element $h\in H$. Due to Lemma \ref{commute}, each $g\in H$ such that $[g,h]=1$ is of the form $\exp(\lambda\log(h))$ for some nonzero $\lambda\in\Co$. Then
$A=\langle\exp(\lambda\log(h))\, :\,  \lambda\in\Co\rangle\cap H= C_{G[[r]]}(h)\cap H:= C_H(h)$.

Further, suppose  $(g^{-1}Ag)\cap A\ne\{1\}$ for some nontrivial $g\in H$. Let us show that $g\in A$.

We have $g^{-1}hg=\exp(\mu\log(h))$ for some $\mu\in\Co$. Let $h=r+\sum_{j= p}^\infty h_p r^{p+1}$ with $h_p\ne 0$. Let $G_{p+1}< G[[r]]$ be the normal subgroup of series of the form $r+\sum_{j= p+1}^\infty c_j r^{j+1}$, $c_j\in\N$, and $\varphi_{p+1}: G[[r]]\rightarrow G[[r]]/G_{p+1}$ be the quotient homomorphism. Then $\varphi_{p+1}\bigl(C_{G[[r]]}(h)\bigr)$ belongs to the central subgroup and is isomorphic to $\Co$, where the isomorphism sends $\varphi_{p+1}(\exp(\lambda\log(h)))$ to $\lambda h_p$, $\lambda\in\Co$. Hence,
\[
\varphi_{p+1}(g^{-1}hg)=\varphi_{p+1}(h)=\varphi_{p+1}(\exp(\mu\log(h)))
\]
which implies that $\mu=1$. Thus $[g,h]=1$ and by Lemma \ref{commute} $g\in C_{G[[r]]}(h)\cap H:=C_H(h)$.

This completes the proof of the proposition.
\end{proof}

\sect{Proof of Theorem \ref{te1.2}}
\subsect{} First, we prove the particular case of the theorem for the ordinal $\delta$ of cardinality $2$, i.e., the following result.
\begin{Th}\label{te1.2a}
{\rm (a)} Let $H_1$ and $H_2$ be nontrivial subgroups of a $BP$-group $H_0\subset G[[r]]$.
Then the group $H_1*_C H_2$, where either $C=\{1\}$  or $H_1\cap H_2\ne\{1\}$ and there is a nontrivial $u\in H_1\cap H_2$ such that
$C=C_{H_1}(u)=C_{H_2}(u)$, is a $BP$-group embeddable to $G[[r]]$.

\noindent {\rm (b)}  An extension of a centralizer of a $BP$-subgroup of $G[[r]]$ is a $BP$-group embeddable to $G[[r]]$.
 \end{Th}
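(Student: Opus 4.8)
The plan is to realize both operations — amalgamated free products over trivial or centralizer subgroups, and extensions of centralizers — concretely inside $G[[r]]$ by exploiting the fact that the transcendence degree of $\Co$ over $\Q$ is $\mathfrak c$. The key algebraic input is that $G[[r]]$ is $CSA$ (Proposition \ref{csa}) and that, via \eqref{word}, every relation $w(\exp(a_1),\dots,\exp(a_n))=1$ among exponentials is equivalent to a countable system of polynomial equations $Q_i(a_1,\dots,a_n)=0$ with rational coefficients in the Taylor coefficients of the $a_j$. So ``embeddability'' becomes a question about simultaneous vanishing/non-vanishing of countably many polynomials, which one controls by choosing the new coefficients to be algebraically independent over the field generated by the old ones.

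First I would treat part (b), the extension of a centralizer $H_1 = \langle H_0, t \mid [c,t]=1,\ c\in C_{H_0}(u)\rangle$. By Lemma \ref{commute}, $C_{H_0}(u) = \{\exp(\lambda\log u) : \lambda\in\Co\}\cap H_0$ is contained in the one-parameter subgroup through $u$; so a candidate image for $t$ must commute with $\exp(\lambda \log u)$ for all relevant $\lambda$, i.e.\ $t$ itself should lie in that one-parameter subgroup — but that would force $t$ into $C_{G[[r]]}(u)$ and collapse the extension. The resolution is the standard one for extensions of centralizers (Myasnikov–Remeslennikov): one does not embed $H_1$ into $G[[r]]$ by fixing $H_0$ pointwise, but builds a new embedding. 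Concretely, I would take the abstract group $H_1$, show it is a $BP$-group using the separation condition of Section 2.1 (a commutation-free tuple in $H_1$ can be rewritten, using the normal form for extensions of centralizers and the $CSA$ property inherited from $H_0$, into a tuple to which the separation condition for $H_0\times\Z$-type pieces applies), and then embed $H_1$ into $G[[r]]$ by induction on the length of words, assigning to $t$ an element $\exp(b)$ whose coefficients are chosen algebraically independent over $\Q(\text{coefficients of }H_0)$ subject only to the commutation constraint $[b, \log c]=0$ for $c\in C_{H_0}(u)$; since that constraint, by Lemma \ref{commute}, just says $b$ is a scalar multiple of $\log u$ plus nothing — wait, that again over-constrains — so instead the correct move is to rescale: replace $H_0$ by an isomorphic copy $H_0'$ inside $G[[r]]$ in which $C_{H_0}(u)$ becomes a proper subgroup of a two-parameter abelian group, using two algebraically independent transcendentals to ``split'' the centralizer. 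This rescaling step, making room for $t$ while keeping $H_0$ faithful, is where the transcendence-degree hypothesis is essential.

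For part (a), the amalgamated product $H_1 *_C H_2$ with $C=C_{H_1}(u)=C_{H_2}(u)$: here both factors already sit in $H_0\subset G[[r]]$, and by Proposition \ref{csa} the subgroup $C$ is malnormal in the maximal abelian subgroup it generates, with $C = C_{G[[r]]}(u)\cap H_1 = C_{G[[r]]}(u)\cap H_2$. I would construct the embedding by conjugating one factor: keep $H_1$ as is, and replace $H_2$ by $\theta(H_2)$ where $\theta$ is an automorphism of $G[[r]]$ (or a suitable self-embedding) that fixes $C_{G[[r]]}(u)$ pointwise but moves $H_2$ so that $\langle H_1, \theta(H_2)\rangle$ has no relations beyond those forced by the amalgamation — again verified coefficient-by-coefficient via \eqref{word}, with algebraic independence of the transcendentals defining $\theta$ killing every unwanted polynomial relation $Q_i=0$. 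The $BP$ property of $H_1*_C H_2$ I would get from the separation condition together with the fact (from the theory of $CSA$ groups, or directly) that an amalgam of $BP$-groups over a malnormal common centralizer is $BP$; the case $C=\{1\}$ is the free product and is the easiest sub-case, essentially Example \ref{ex1.3}(1) iterated once.

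The main obstacle, in both parts, is the same: to embed while preserving faithfulness one must enlarge the centralizer $C_{H_0}(u)$ inside $G[[r]]$ — but Lemma \ref{commute} says centralizers of nontrivial elements in $G[[r]]$ are \emph{one}-dimensional, so there is literally no room for $t$ (or for a conjugate of $H_2$ that still amalgamates correctly along $C$) unless one first replaces $H_0$ by a cleverly chosen isomorphic copy sitting ``transversally.'' Making this replacement precise — exhibiting, for any countable-coefficient subgroup $H_0\subset G[[r]]$, an isomorphism onto another subgroup in which a prescribed cyclic-centralizer $C$ lifts to something with the needed extra parameters, using $\mathrm{trdeg}_\Q\Co = \mathfrak c$ and the polynomial description \eqref{log}, \eqref{word} — is the technical heart of the proof, and is what I expect to occupy most of Section 3.
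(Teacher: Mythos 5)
There is a genuine gap, and it sits exactly where you locate the ``technical heart.'' In part (b) you reject the move of sending $t$ into the one-parameter subgroup through $u$ on the ground that this ``would force $t$ into $C_{G[[r]]}(u)$ and collapse the extension,'' and you propose instead to replace $H_0$ by an isomorphic copy in which $C_{H_0}(u)$ sits inside a \emph{two}-parameter abelian subgroup. This fix is impossible: by Lemma \ref{commute} any two commuting nontrivial elements of $G[[r]]$ have proportional logarithms, so every abelian subgroup of $G[[r]]$ lies in a single one-parameter subgroup and there is no room to ``split'' a centralizer, no matter which isomorphic copy of $H_0$ you choose. The correct (and in the paper, actual) construction is the one you discarded: after first passing to an isomorphic copy of $H_0$ whose coefficients lie in a proper subfield $\mathbb F\subset\Co$ (this, not centralizer enlargement, is the purpose of replacing $H_0$ by an isomorphic copy), one chooses $s$ transcendental over $\mathbb F$ and maps $t\mapsto u^s=\exp(s\log u)$. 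It does not matter that $u^s$ commutes with all of $C_{G[[r]]}(u)$, since the presentation of the extension only demands commutation with $C_{H_0}(u)$; what must be checked is injectivity, and that is rescued by the transcendence of $s$ together with the $BP$ property of $H_0$.

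This points to the second gap, present in both parts: your faithfulness mechanism is ``algebraic independence kills every unwanted polynomial relation $Q_i=0$,'' but generic choice alone cannot do this, because a purported relation in $\langle H_1,\bar H_2\rangle$ (or in $\langle G,u^s\rangle$) involves the old elements of $H_0$ as well, and those satisfy plenty of relations. The actual engine is a specialization-plus-separation argument: by \eqref{log} and \eqref{word}, a relation such as $h_1c^{-s}h_2c^{s}\cdots h_{2k-1}c^{-s}h_{2k}c^{s}=1$, with all coefficients except $s$ in $\mathbb F$ and $s$ transcendental over $\mathbb F$, forces the same identity with $s$ replaced by every integer $n$; the resulting family of identities lies inside the $BP$-group $H_0$, so the separation condition of Section~2.1 yields a commutation $[h_{j+1}^{-1}c^{\pm1}h_{j+1},c^{\mp1}]=1$, whence Lemma \ref{commute} and Proposition \ref{csa} force $h_{j+1}\in C$ (respectively $h_{j+1}=1$ in the free-product case, where one conjugates $H_2$ by $c^{t}$ with $c=\exp(se_1+s^2e_2)$ chosen with coefficients outside $\mathbb F$), and a Britton-type length reduction finishes by induction. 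Your sketch for part (a) (conjugating $H_2$ by a map fixing $C_{G[[r]]}(u)$ pointwise) is in the right direction --- the paper conjugates by $c^{s}$ --- but without the specialization-to-integers step and the appeal to the separation condition of $H_0$ the injectivity claim is unsupported, and without abandoning the ``two-parameter centralizer'' idea part (b) cannot be carried out at all.
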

\begin{proof}
(a) Let $S\subset\RR$ be the transcendence basis of $\Co$ over $\mathbb Q$.
It is known that $S$ is of the cardinality of the continuum. We write $S=S_0\sqcup S_0^c$, where $S_0$ and $S_0^c$ are of the cardinality of the continuum, and choose some
$s,t\in S_0^c$.
Then a bijection $S\rightarrow S_0$ extends to an embedding  $\sigma :\Co\hookrightarrow\Co$ such that $s$ and $t$  are  algebraically independent over $\sigma(\Co)$. The isomorphism $\Co\cong\sigma(\Co)$ induces an isomorphism
$G[[r]]\cong G_{\sigma(\Co)}[[r]]$, where the latter is the subgroup of $G[[r]]$ of series with coefficients in $\sigma(\Co)$. Thus without loss of generality we may assume that  $H_0\leq G_{\sigma(\Co)}[[r]]$. 

Let $C\leq H_1\cap H_2$ be as in the statement of the theorem. First, we consider the case $C\ne\{1\}$. Then $C\le C_{G[[r]]}(c):=\langle c^\alpha\, :\, \alpha\in\Co\rangle$
for a fixed $c\in C\setminus\{1\}$; here we set for brevity $c^\alpha:=\exp(\alpha\log(c))$.
\begin{Lm}\label{lem2.1}
The group $\bar H_2:=c^{-s} H_2\, c^s$ satisfies $\bar H_2\cap H_1= C$.
\end{Lm}
\begin{proof}
Since $C\leq C_{G[[r]]}(c)$, $C\le \bar H_2\cap H_1$. Suppose that there exists some $u\in (\bar H_2\cap H_1)\setminus C$. Then $u=c^{-s}v c^s$ for some $v\in H_2\setminus C$. Since $s$ is algebraically independent over $\sigma(\Co)$ and the coefficients of the series expansion of $u$ belong to $\sigma(\Co)$, the latter identity implies that $u=c^{-\alpha}v c^\alpha$ for all $\alpha\in\Co$ (see \eqref{log},\eqref{word}). Thus for $\alpha=0$ we have $u=v$ and from here for $\alpha=1$ we obtain that $[u,c]=1$. Then Lemma \ref{commute} implies that $v=u\in C_{G[[r]]}(c)\cap H_2=C$, a contradiction that proves the lemma.
\end{proof}
Let $\widetilde H\le G[[r]]$ be a subgroup generated by $\bar H_2$ and $H_1$.
Consider the epimorphism $\varphi: H_1*H_2\rightarrow \widetilde H$ such that $f(h_1):=h_1$, $h_1\in H_1$, and $f(h_2):=c^{-s}h_2\, c^s\in \bar H_2$, $h_2\in H_2$. Since $c^{-s}C\,c^s= C$, $\varphi$ descends to an epimorphism  $\tilde \varphi: H_1*_C H_2\rightarrow\widetilde H$.
\begin{Lm}\label{lem2.2}
$\tilde \varphi$ is an isomorphism.
\end{Lm}
\begin{proof}
Let $h\in H_1*_C H_2$ be such that $\tilde \varphi (h)=1$. Then there exist $h_1,\dots, h_{2k}$, where $h_{2i-1}\in H_1$, $h_{2i}\in H_2$, $1\le i\le k$,  such that $h=h_1*\cdots *h_{2k}$ (here $*$ stands for the product on $H_1*_C H_2$).
Thus we have
\[
\tilde\varphi (h)= h_1c^{-s}h_2 c^{s}\cdots h_{2k-1} c^{-s}h_{2k} c^{s}=1.
\]
Since $s$ is algebraically independent over $\sigma(\Co)$ the latter implies a similar identity with an arbitrary $\alpha\in\Co$ instead of $s$ (see \eqref{log},\eqref{word}). In particular, for all $n\in\mathbb Z$,
\begin{equation}\label{eq3.1}
h_1c^{-n}h_2 c^n\cdots h_{2k-1} c^{-n}h_{2k} c^{n}h_{2k+1}=1,\quad h_{2k+1}:=1.
\end{equation}
Since the element on the right belongs to the $BP$-group $H_0$, by the separation condition (see Section~2.1)
there exists $1\le j\le 2k-1$ such that
\[
\bigl[h_{j+1}^{-1}c^{(-1)^j}h_{j+1},c^{(-1)^{j+1}}\bigr]=1.
\]
Now Lemma \ref{commute} implies that $h_{j+1}^{-1}c h_{j+1}\in C:=C_{G_s}(c)$, $s=1,2$. Hence, due to Proposition \ref{csa}, $h_{j+1}\in C$. If $k=1$, this and \eqref{eq3.1} imply that
$\tilde \varphi(h)=h_1h_2=1$, $h_2\in C$,  and so $h_1\in C$ as well.
In particular, $h=h_1*h_2\in C\le H_1*_C H_2$. Since $\tilde \varphi|_C$ is identity, $h=1$ in this case.

If $k>1$, then
\[
h_jc^{(-1)^j}h_{j+1}c^{(-1)^{j+1}}h_{j+2}=h_jh_{j+1}h_{j+2}\in G_{\frac s2 },\quad s=3+(-1)^j.
\]
Therefore
\[
h=\tilde h_1*\cdots *\tilde h_{2k-2},\quad {\rm where}\quad \tilde h_i=h_i\quad {\rm if}\quad i\ne j,\quad {\rm and}\quad
\tilde h_j:=h_j *h_{j+1} *h_{j+2}.
\]
Here $\tilde h_{2i-1}\in H_1$, $\tilde g_{2i}\in H_2$, $1\le i\le k-1$.

Applying such reductions $k-1$ times and using at the end the above considered case of $k=1$ we obtain that $h=1$.

This proves that $\tilde \varphi$ is a monomorphism and, hence, it is an isomorphism (as $\tilde \varphi$ is an epimorphism by definition).
\end{proof}

Thus we have proved that $\widetilde H$ is a subgroup of $G[[r]]$ isomorphic to $H_1*_C H_2$ for $C\ne\{1\}$.

Now suppose that $C=\{1\}$. Let us take $c:=\exp(se_1+s^2e_2)\in G[[r]]\setminus G_{\sigma(\Co)}[[r]]$ and set
\[
\bar H_2:=c^{-t}H_2 c^t.
\]
Then similarly to Lemma \ref{lem2.1} we get the following.
\begin{Lm}\label{lem3.3}
$\bar H_2\cap H_1=\{1\}$.
\end{Lm}
\begin{proof}
If there exists some nontrivial $u\in \bar H_2\cap H_1$, then $u=c^{-t}v c^t$ for some $v\in H_2$. As in the proof of Lemma \ref{lem2.1} this implies $u=v\in H_2\cap H_1$. If $H_2\cap H_1=\{1\}$, then we obtain a contradiction. For otherwise, as in the proof above the separation condition and Proposition \ref{csa} imply that $u=c^{\alpha}$ for some nonzero $\alpha\in\Co$. Hence, $\log(u)=\alpha se_1+\alpha s^2 e_2$. Since the coefficients of the series expansion of $\log u$ belong to $\sigma(\Co)$, the latter yields $\alpha s, \alpha s^2\in\sigma(\Co)$; hence $s=\frac{\alpha s^2}{\alpha s}\in\sigma(\Co)$. This contradicts the algebraic independence of $s$ over $\sigma(\Co)$ and completes the proof of the lemma.
\end{proof}

Let $\widetilde H\le G[[r]]$ be the subgroup generated by $H_1$ and $\bar H_2$.
Consider the surjective homomorphism $\varphi : H_1*H_2\rightarrow \widetilde H$ such that
$\varphi (h_1)=gH_1$, $h_1\in H_1$, and $\varphi (h_2)=c^{-t}h_2c^t$, $h_2\in H_2$.
\begin{Lm}\label{lem3.4}
$\varphi $ is an isomorphism.
\end{Lm}
\begin{proof}
Let $h\in {\rm Ker}(\varphi)$. Then $h=h_1*\cdots *h_{2k}$  for some $h_{2i-1}\in H_1$, $h_{2i}\in H_2$, $1\le i\le k$ (here $*$ stands for the product on $H_1*H_2$).
Thus we have
\[
\varphi(h)= h_1c^{-t}h_2 c^{t}\cdots h_{2k-1} c^{-t}h_{2k} c^{t}=1.
\]
Since $t$ is algebraically independent over $\sigma(\Co)$, arguing as in the proof of Lemma \ref{lem2.2} we obtain that there exists $1\le j\le 2k-1$ such that
\[
\bigl[h_{j+1}^{-1}c^{(-1)^j}h_{j+1},c^{(-1)^{j+1}}\bigr]=1.
\]
Now Lemma \ref{commute} implies that $h_{j+1}^{-1}c h_{j+1}\in C_{G[[r]]}(c)$. Hence, due to Proposition \ref{csa}, $h_{j+1}\in C_{G[[r]]}(c)$, i.e., $h_{j+1}=c^\alpha$ for some $\alpha\in\Co$. Then arguing as in the proof of Lemma \ref{lem3.3} we obtain that $\alpha=0$. Hence, $h_{j+1}=1$ and so $h=\tilde h_1*\cdots *\tilde h_{2k-2}$,  where $\tilde h_i=h_i$ if $i\ne j$ and
 $\tilde h_j:=h_j *h_{j+1} *h_{j+2}$. Here $\tilde h_{2i-1}\in H_1$, $\tilde h_{2i}\in H_2$, $1\le i\le k-1$.

 Applying such reductions $k-1$ times we obtain at the end that $h=1$.

 This completes the proof of the lemma.
\end{proof}

Thus we have proved that in this case $\widetilde H$ is a subgroup of $G[[r]]$ isomorphic to $H_1*H_2$.

Finally, in both cases groups $\widetilde H$ are $BP$ by Theorem 4 and Corollary 6 of \cite{KMS} whose conditions are satisfied due to \cite[Prop.\,5]{KMS} and our Proposition \ref{csa}.

This completes the proof of part (a) of the theorem.\smallskip

\noindent (b) Let $G$ be a $BP$-subgroup of $G [[r]]$ and $C=C_G(u)$ for a nontrivial $u\in G$. As in the proof of (a) we assume that $G\le G_\mathbb F [[r]]$, where $\mathbb F$ is a proper subfield of $\Co$ and $s\in\Co\setminus\mathbb F$ is algebraically independent over $\mathbb F$.  Consider a subgroup $\widetilde G\le G[[r]]$ generated by $G$ and $u^s$.
\begin{Lm}\label{lem3.5}
$\widetilde G$ is isomorphic to the group $G_t:=\langle G, t\, |\, [c, t] = 1,\, c \in C_G(u)\rangle$.
\end{Lm}
\begin{proof}
Consider the epimorphism $\varphi : G*\,\Z\rightarrow\widetilde G$ such that $\varphi(g)=g$, $g\in G$, and $\varphi (n)=u^{ns}$, $n\in\Z$. Since $[\varphi(1),c]=1$, $c\in C_G(u)$, $\varphi$ descends to an epimorphism $\tilde \varphi: G_t\rightarrow\widetilde G$.
Let us show that $\tilde \varphi$ is a monomorphism. This will complete the proof of the lemma.

Let $g\in {\rm Ker}(\tilde \varphi)$. Then $g=g_1*t^{\alpha_1}*\cdots *g_k *t^{\alpha_k}$, where $g_i\in G$, $\alpha_i\in\Z$, $1\le i\le k$ (here $*$ is the product on $G_t$). Thus we have
\begin{equation}\label{eq3.2}
\tilde \varphi (g)=g_1 u^{\alpha_1 s}\cdots g_k u^{\alpha_k s}=1.
\end{equation}
If $k=1$, then we obtain that $g_1=u^{-\alpha_1 s}$. Since $s$ is algebraically independent over $\mathbb F$ and the coefficients of the series expansion of $g_1$ belong to $\mathbb F$, this implies that $\alpha_1=0$, hence, $g_1=1$ and
$g=g_1* t^{\alpha_1}=1$.

For otherwise, by the same reason \eqref{eq3.2} implies that
\[
g_1 u^{\alpha_1 n}\cdots g_k u^{\alpha_k n}=1,\quad n\in\Z.
\]
The expressions on the right belong to the $BP$-group $G$, hence, due to the
separation condition (see Section~2.1) there exists $1\le i<k$ such that
\[
[g_{i+1}^{-1}u^{\alpha_i}g_{i+1},u^{\alpha_{i+1}}]=1.
\]
If both $\alpha_i,\alpha_{i+1}\ne 0$, then arguing as in the proof of part (a) we obtain that
$g_{i+1}\in C_G(u)$. This reduces the length of the word representing $g$ from $k$ to $k-1$. The same is true if $\alpha_i=\alpha_{i+1}=0$ and $i+1<k$. Finally, if $\alpha_k=0$, then the separation condition provides a similar commutativity relation with a new $i<k-1$ which leads to the word reduction for $g$ as well. Applying this reduction procedure
$k-1$ times and using the above considered case $k=1$, we get that $g=1$, i.e. $\tilde \varphi$ is an injection.
\end{proof}
To complete the proof of part (b) note that $G_t$ is a $BP$-group due to \cite[Thm.\,4]{KMS}.

\end{proof}

\subsect{Proof of Theorem \ref{te1.2}}
\begin{proof}
Let $S \subset\Co$ be the transcendence basis of $\Co$ over $\mathbb Q$.
We write $S=S_0\sqcup S_1\sqcup S_2\sqcup S_3$, where all $S_i$ are of the cardinality of the continuum. Then a bijection $S\rightarrow S_0$ extends to an embedding $\sigma: \Co\hookrightarrow \Co$ such that $S\setminus S_0$ is the transcendence basis of $\Co$ over $\sigma(\Co)$. The isomorphism $\Co\cong\sigma(\Co)$ induces an isomorphism
$G[[r]]\cong G_{\sigma(\Co)}[[r]]$. Thus without loss of generality we may assume that  $G_0\le G_{\sigma(\Co)}[[r]]$. 

\noindent Further, since the ordinal $\delta$ is of  cardinality $\le\frak c$, there exist injections $\tau_i:\delta\rightarrow S_i$, $1\le i\le 3$.

To prove the result we use the transfinite induction based on Theorem \ref{te1.2a}.

Specifically, we prove that for each $\lambda\le\delta$, $G_\lambda$ is a $BP$-group and  there is a monomorphism $\varphi_\lambda: G_\lambda\rightarrow G_{\mathbb F_\lambda}[[r]]$, where $\mathbb F_\lambda\subset\Co$ is the minimal subfield containing $\sigma(\Co)$ and all  $\tau_i(\gamma)$, $\gamma\le\lambda$, $i=1,2,3$, such that $\varphi_\lambda|_{G_\alpha}=\varphi_\alpha$ for all $\alpha<\lambda$.

For $\lambda=0$ the result holds trivially with $\varphi_0={\rm id}$. Assuming that the result holds for all ordinals $<\lambda$ let us prove it for $\lambda$. 

First, assume that $\lambda$ is a limit ordinal. By the definition,
\[
G_\lambda:=\bigcup_{\alpha<\lambda}G_\alpha.
\]
Since all $G_\alpha$, $\alpha<\lambda$, are $BP$-groups by the induction hypothesis,
their union $G_\lambda$ is a $BP$-group as well. 

Now, we set
\[
\varphi_\lambda(g):=\varphi_\alpha(g),\quad g\in G_\alpha,\ \alpha<\lambda.
\]
Then due to the induction hypothesis, $\varphi_\lambda$ is a well-defined monomorphism of $G_\lambda$ to $G[[r]]$. Moreover, the coefficients of the series expansions of elements of $\varphi_\lambda(G_\lambda)$ belong to $\cup_{\alpha<\delta}\mathbb F_\alpha$. Clearly, the latter is a subfield of $\mathbb F_\lambda$ which proves the required statement in  this case.

Next, assume that $\lambda$ is a successor ordinal, i.e., $\lambda=\alpha+1$ for an ordinal $\alpha<\lambda$. We apply Theorem \ref{te1.2a} as follows.

If $G_{\alpha+1}=G_\alpha*_{C_\alpha} F_\alpha$, where $F_\alpha$ is a nontrivial subgroup of $G_\alpha$, and either $C_\alpha=\{1\}$ or $C_\alpha=C_{G_\alpha}(u)=C_{F_\alpha}(u)$ for some nontrivial $u\in F_\alpha$, then we choose in Theorem \ref{te1.2a}(a) $H_0=H_1=\varphi_\alpha(G_\alpha)$, $H_2=\varphi_\alpha(F_\alpha)$ and $s=\tau_1(\alpha+1)\in S_1$, $t=\tau_2(\alpha+1)\in S_2$. Then the proof of the theorem implies that $G_{\alpha+1}$ is embeddable to $G[[r]]$ and the corresponding monomorphism of Lemma \ref{lem2.2} $\tilde\varphi$ denoted in our case by $\varphi_{\alpha+1}$ extends $\varphi_\alpha$ and is such that the coefficients of series expansions of elements of $\varphi_{\alpha}(G_{\alpha+1})$ belong to the minimal subfield of $\Co$ containing $\mathbb F_\alpha$ and  $\tau_1(\alpha+1)$, $\tau_2(\alpha+1)$ which is clearly a subfield of $\mathbb F_{\alpha+1}$.

If $G_{\alpha+1}$ is  an extension of a centralizer of $G_\alpha$, then we set in the proof  of Theorem \ref{te1.2a}(b), $G=\varphi_\alpha(G_\alpha)$ and $s=\tau_3(\alpha+1)$. Due to the theorem, $G_{\alpha+1}$ is embeddable to $G_\Co[[r]]$ and the corresponding monomorphism of Lemma \ref{lem3.5} $\tilde\varphi$ denoted now by $\varphi_{\alpha+1}$ extends $\varphi_\alpha$ and is such that the coefficients of series expansions of elements of $\varphi_{\alpha}(G_{\alpha+1})$ belong to the minimal subfield of $\Co$ containing $\mathbb F_\alpha$ and  $\tau_3(\alpha+1)$ which is a subfield of $\mathbb F_{\alpha+1}$. Moreover, in both cases $G_{\alpha+1}$ is a $BP$-group. This completes the proof of the inductive step and, hence, of Theorem \ref{te1.2}.
\end{proof}
\section{Proof of Theorem \ref{te1.5}}
Repeating word-for-word the proof of Proposition \ref{csa} one obtains that any subgroup of the group $G_\RR [[r]]$ is $CSA$ and, moreover, maximal abelian subgroups of
a nontrivial $H\le G_\RR [[r]]$ have the form $C_H(u)=C_{G_\RR [[r]]}(u)\cap H=\langle\exp(\lambda\log(u))\, :\,  \lambda\in\RR\rangle\cap H$ for nontrivial $u\in H$.
One uses this to prove the following version of Theorem \ref{te1.2a}.

Let $\mathbb F\subset\RR$ be a subfield such that the transcendence degree of $\RR$ over $\mathbb F$ is at least two.
\begin{Th}\label{te1.2b}
{\rm (a)}  
Let $H_1$ and $H_2$ be nontrivial subgroups of a $BP$-group $H_0\subset G_\mathbb F[[r]]$.
Then the group $H_1*_C H_2$, where either $C=\{1\}$  or $H_1\cap H_2\ne\{1\}$ and there is a nontrivial $u\in H_1\cap H_2$ such that
$C=C_{H_1}(u)=C_{H_2}(u)$, is a $BP$-group embeddable to $G_\RR [[r]]$.

\noindent {\rm (b)}  An extension of a centralizer of a $BP$-subgroup of $G_\mathbb F[[r]]$ is a $BP$-group embeddable to $G_\RR [[r]]$.
\end{Th}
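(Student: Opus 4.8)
The plan is to transcribe the proof of Theorem~\ref{te1.2a}, with the coefficient subfield $\sigma(\Co)$ replaced by $\mathbb F$ and every construction carried out inside $G_\RR[[r]]$. Two facts make this transcription routine. First, the analogue of Proposition~\ref{csa} for $G_\RR[[r]]$ recorded above: every subgroup of $G_\RR[[r]]$ is $CSA$, and the maximal abelian subgroups of a nontrivial $H\le G_\RR[[r]]$ are exactly the subgroups $C_H(u)=\langle\exp(\lambda\log u)\,:\,\lambda\in\RR\rangle\cap H$ for $u\in H\setminus\{1\}$; this, together with Lemma~\ref{commute} --- whose proof takes place in the (complex) Lie algebra $\mathfrak g$ and which therefore forces $\lambda\in\RR$ as soon as $a_1,a_2$ are real --- is all the structural input used in Section~3. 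Second, by hypothesis $\RR$ has transcendence degree $\ge 2$ over $\mathbb F$, so we may fix two elements $s,t\in\RR$ algebraically independent over $\mathbb F$; these take over the roles of the two elements of $S_0^c$ in the proof of Theorem~\ref{te1.2a}. The only genuinely new point is the \emph{transcendence bookkeeping}, and the hypothesis ``degree $\ge2$'' is exactly what is needed: the construction of part (a) in the case $C=\{1\}$ consumes two algebraically independent elements, and every other case only one.

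For part (a), in the case $C\ne\{1\}$ pick $c\in C\setminus\{1\}$, write $c^\alpha:=\exp(\alpha\log c)$, and put $\bar H_2:=c^{-s}H_2c^s$; since $\log c$ has coefficients in $\mathbb F$ and $s\in\RR$, equations \eqref{log}, \eqref{expon} give $c^{\pm s}\in G_\RR[[r]]$, hence $\bar H_2\le G_\RR[[r]]$. In the case $C=\{1\}$ put $c:=\exp(se_1+s^2e_2)\in G_\RR[[r]]$ (real since $s\in\RR$) and $\bar H_2:=c^{-t}H_2c^t\le G_\RR[[r]]$. In both cases, transcendence of $s$ (resp.\ $t$) over $\mathbb F$ and \eqref{log}, \eqref{word}, together with the fact that elements of $H_0$ have coefficients in $\mathbb F$, yield the intersection identities $\bar H_2\cap H_1=C$, resp.\ $\bar H_2\cap H_1=\{1\}$, by the arguments of Lemmas~\ref{lem2.1} and~\ref{lem3.3} --- finishing with Lemma~\ref{commute}, and, in the case $C=\{1\}$, with the observation that a nontrivial $u=c^\alpha$ with real coefficients and $\alpha\ne0$ would force $\alpha s,\alpha s^2\in\mathbb F$, hence $s=\alpha s^2/(\alpha s)\in\mathbb F$. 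Letting $\widetilde H:=\langle H_1,\bar H_2\rangle\le G_\RR[[r]]$ and $\varphi$ the natural epimorphism $H_1 *_C H_2\to\widetilde H$ (resp.\ $H_1 * H_2\to\widetilde H$), one proves $\varphi$ injective following Lemmas~\ref{lem2.2} and~\ref{lem3.4}: a relation $h_1c^{-s}h_2c^s\cdots=1$ (resp.\ with $t$) upgrades, by transcendence of the parameter over $\mathbb F$, to the same relation with an arbitrary $n\in\Z$ in place of the parameter; since $H_0$ is a $BP$-group, the separation condition of Section~2.1 applied to the tuple of powers of $c^{\pm1}$ produces a syllable $h_{j+1}$ with $[h_{j+1}^{-1}c^{(-1)^j}h_{j+1},c^{(-1)^{j+1}}]=1$, whence by Lemma~\ref{commute} $h_{j+1}^{-1}ch_{j+1}\in C_{G_\RR[[r]]}(c)$ and so by the real $CSA$ property $h_{j+1}\in C$ (resp.\ $h_{j+1}\in C_{G_\RR[[r]]}(c)$, forcing $h_{j+1}=1$ as in Lemma~\ref{lem3.3}); iterating this syllable reduction $k-1$ times and using the case $k=1$ gives $h=1$. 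Finally $\widetilde H$ is $BP$ by Theorem~4 and Corollary~6 of \cite{KMS}, whose hypotheses are met by \cite[Prop.\,5]{KMS} and the real $CSA$ property.

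For part (b), given a $BP$-subgroup $G\le G_\mathbb F[[r]]$ and $C=C_G(u)$ with $u\ne1$, put $\widetilde G:=\langle G,u^s\rangle\le G_\RR[[r]]$ --- again $u^s=\exp(s\log u)\in G_\RR[[r]]$ since $s\in\RR$ --- and repeat the proof of Lemma~\ref{lem3.5}: the epimorphism $G*\Z\to\widetilde G$ sending $g\mapsto g$ and $1\mapsto u^s$ descends to $\widetilde\varphi\colon G_t\to\widetilde G$ with $G_t=\langle G,t\mid[c,t]=1,\ c\in C_G(u)\rangle$, and $\widetilde\varphi$ is injective because transcendence of $s$ over $\mathbb F$ turns a relation $g_1 u^{\alpha_1 s}\cdots g_k u^{\alpha_k s}=1$ into $g_1 u^{\alpha_1 n}\cdots g_k u^{\alpha_k n}=1$ for all $n\in\Z$, after which the separation condition in the $BP$-group $G$, Lemma~\ref{commute}, and the real $CSA$ property reduce the syllable length exactly as before; $G_t$ is $BP$ by \cite[Thm.\,4]{KMS}. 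The step I expect to be the main obstacle is, as in Section~3, the passage from the relation containing the transcendental parameter to the relations with integer exponents followed by the invocation of the separation condition in a $BP$-group that contains the relevant elements; this is handled verbatim as in the complex case, the only extra care needed being the --- immediate --- verification from \eqref{log}, \eqref{word}, \eqref{expon} and $s,t\in\RR$ that every series produced along the way has real coefficients, so that all constructions indeed stay inside $G_\RR[[r]]$.
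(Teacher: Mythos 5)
Your proposal is correct and follows essentially the same route as the paper, which likewise proves Theorem \ref{te1.2b} by repeating the proof of Theorem \ref{te1.2a} verbatim with $\sigma(\Co)$ replaced by $\mathbb F$, $\Co$ by $\RR$, and $G[[r]]$ by $G_\RR[[r]]$, relying on the real analogue of Proposition \ref{csa} and on two elements $s,t\in\RR$ algebraically independent over $\mathbb F$ supplied by the transcendence-degree hypothesis. Your added checks (that $\lambda$ in Lemma \ref{commute} is real for real series, and that all conjugators $c^{\pm s}$, $c^{\pm t}$, $u^s$ stay in $G_\RR[[r]]$) are exactly the "minor modification" the paper leaves implicit.
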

\begin{proof}
Suppose $S=S_0\sqcup S_0^c\subset\RR$ is the transcendence basis of $\RR$ over $\mathbb Q$, where $S_0$ is the transcendence basis of $\mathbb F$ over $\mathbb Q$. By the definition of $\mathbb F$ there exist some $s,t\in S_0^c$ algebraically independent over $\mathbb F$.  Starting with these elements we repeat literally the proof of Theorem \ref{te1.2a} replacing $\sigma(\Co)$ by $\mathbb F$, $\Co$ by $\RR$ and $G[[r]]$ by $G_\RR [[r]]$ to get the required statement.
\end{proof}
\begin{proof}[Proof of Theorem \ref{te1.5}]
Since the cardinality of $G_0$ is less than $\mathfrak c$, the field $\mathbb F\subset\RR$ generated by coefficients of series expansions of elements from $G_0$ has the cardinality less than 
$\mathfrak c$ as well. Suppose $S=S_0\sqcup S_0^c\subset\RR$  is the transcendence basis of $\RR$ over $\mathbb Q$ such that $S_0$ is the transcendence basis of $\mathbb F$ over $\mathbb Q$. Since $S$ is of the cardinality of the continuum, $S_0^c:=S\setminus S_0$ is of the cardinality of the continuum as well. Hence, we can write $S_0^c=S_1\sqcup S_2\sqcup S_3\subset\RR$ where all $S_i$, $1\le i\le 3$, are of the cardinality of the continuum. From now on the proof repeats literally that of Theorem \ref{te1.2} with $\sigma(\Co)$ replaced by $\mathbb F$, $\Co$ by $\RR$ and $G[[r]]$ by $G_\RR [[r]]$.
We leave the details to the reader.
\end{proof}

\end{document}